\documentclass[a4paper, 10pt]{amsart}

\usepackage{amsthm, amsmath, amssymb}
\usepackage{xcolor}
\usepackage{enumitem}
\usepackage{microtype}
\topmargin 0mm \evensidemargin 15mm \oddsidemargin 15mm \textwidth
140mm \textheight 230mm

\usepackage[noadjust]{cite}
\usepackage[colorlinks, citecolor=red, urlcolor=blue, bookmarks=false, hypertexnames=true]{hyperref}

\theoremstyle{plain}
\newtheorem{theorem}{Theorem}[section]
\newtheorem*{theorem*}{Theorem \ref{thm:appl}}

\newtheorem{lemma}[theorem]{Lemma}

\newcommand{\ang}[2]{\langle #1, #2 \rangle}

\theoremstyle{definition}

\newtheorem{remark}[theorem]{Remark}

\numberwithin{equation}{section}

\DeclareMathOperator{\trace}{trace}
\DeclareMathOperator{\grad}{grad}

\DeclareMathOperator{\Div}{div}
\DeclareMathOperator{\id}{Id}
\DeclareMathOperator{\Hess}{Hess}

\DeclareMathOperator{\Ric}{Ric}
\allowdisplaybreaks

\title[Rigidity results for biconservative hypersurfaces]{Compact Biconservative Hypersurfaces in Space Forms: Rigidity Without Scalar Curvature Assumptions}
\author{ Aykut Kayhan}
\address{Permanent address: Mathematics and Science Education, Maltepe University, Istanbul, 34480, Turkey}
\address{Current address: Faculty of Mathematics, Al. I. Cuza University of Iasi, Blvd. Carol I, no. 11, 700506 Iasi, Romania} \email{aykutkayhan@maltepe.edu.tr}
	
\subjclass[2020]{Primary 53C42. Secondary 53C40.}
\keywords{biconservative hypersurfaces, scalar curvature}

\thanks{The author was supported by the Scientific and Technological Research Council of T\"{u}rkiye (T\"{U}B\.{I}TAK), under the 2219-International Post-Doctoral Research Fellowship Programme (grant no: 1059B192300478). The opinions and views expressed herein are those of the authors and do not reflect those of T\"{U}B\.{I}TAK}

\begin{document}
	\maketitle
	
	\begin{abstract}
In this study, we investigate the intrinsic properties of compact biconservative hypersurfaces in space forms. In this framework, we establish rigidity results without imposing the assumption of constant scalar curvature. Furthermore, we present an additional result that does not require any assumptions on the sectional curvature. The key tool in our approach is the introduction of a novel divergence-free tensor, which enables us to derive these results without the usual curvature assumptions.
	\end{abstract}
	
	\section{Introduction}
	\begin{sloppypar}
 Let $N^{m+1}(c)$ be an $(m+1)-$dimensional Riemannian manifold with constant sectional curvature $c$, we also call it a space form such that $N^{m+1}(1)=\mathbb{S}^{m+1}$, i.e. $(m+1)-$dimensional Euclidean sphere $,\ N^{m+1}(0)=\mathbb{R}^{m+1}$, i.e. $(m+1)-$dimensional Euclidean space,$\ N^{m+1}(-1)=\mathbb{H}^{m+1}$, i.e. $(m+1)-$dimensional hiperbolic space.
	\end{sloppypar}

	 Let $M^m$ be an $m-$ dimensional hypersurface in space form $N^{m+1}(c)$. The investigation of curvature characteristics of compact hypersurfaces remains a central and intriguing subject in the field. In 1977, S.Y. Cheng and S.T. Yau \cite{ChengYau1977} studied compact hypersurfaces with constant scalar curvature in space form $N^{m+1}(c)$. They proved that Let \(M\) be an \(m\)-dimensional compact hypersurface with constant scalar curvature \(m(m-1)R\). If \(R \geq c\) and the sectional curvatures of \(M\) are non negative, then \(M\) is isometric to either the totally umbilical hypersurface \(\mathbb{S}^m(r)\) or the Riemannian product \(\mathbb{S}^k(r_1) \times \mathbb{S}^{n-k}(r_2)\) for some \(1 \leq k \leq m-1\), where \(\mathbb{S}^k(r)\) denotes the \(k\)-dimensional sphere of radius \(r\). To prove this theorem, they introduced a differentiable operator $\square$, which was later named after them, such that
	 $$ \square\alpha=\langle mf \id-A,\Hess\alpha\rangle,$$
	 where $\alpha\in C^2(M)$, $f$ is the mean curvature function, and  $\id$ and $A$ denote the identity and shape operators of $M$, respectively. 
	  Importantly, when this operator is associated to a divergence-free, symmetric and $(1,1)$ tensor on $M$, it becomes self-adjoint, a property that significantly enhances its usefulness. In fact, the main strategy of this paper is to effectively utilize the Cheng–Yau operator after introducing such a tensor field. In this context, we provide a detailed discussion of this operator in the Preliminaries section.
	  
	 A natural question arises as to whether the condition of non-negative sectional curvature can be relaxed. In 1996, Li answered this question affirmatively by using a similar argument \cite{haizhong1996hypersurfaces}. He showed that let $M$ be an $m-$dimesional $(m\geq 3)$ compact hypersurface with constant scalar curvature $m(m-1)R$ in $\mathbb{S}^{m+1}$. If \(R \geq 1\) and $|A|^2\leq (m-1)\frac{m(R-1)+2}{m-2}+\frac{m-2}{m(R-1)+2}$, then $M$ is either totally umbilical or the Riemmanian product $\mathbb{S}^1(\sqrt{1-a^2})\times\mathbb{S}^{m-1}(a)$ with $a^2=\frac{m-2}{mR}\leq \frac{m-2}{m}$. Also In Li’s theorem, the Chen–Yau operator $\square$, constant scalar curvature, and the condition $R\geq 1$ are essential assumptions. it is natural to ask whether the condition of constant scalar curvature in these results is necessary? In 2003, the first answer to this question came from Q.-M. Cheng \cite{QMcheng2003compact}. He observed that some Riemannian products are not covered by the aforementioned results such that given $0 < a < 1$, by considering the standard immersions 
	 $
	 S^{n-1}(a) \subset \mathbb{R}^m, \quad S^1(\sqrt{1 - a^2}) \hookrightarrow \mathbb{R}^2,
	 $
	 and taking the Riemannian product immersion
	 $
	 S^1(\sqrt{1 - a^2}) \times S^{m-1}(a) \hookrightarrow \mathbb{R}^2 \times \mathbb{R}^m,
	 $
	 one obtains a compact hypersurface
	 $
	 S^1(\sqrt{1 - a^2}) \times S^{m-1}(a)
	 $
	 in $\mathbb{S}^{m+1}(1)$ with constant scalar curvature
	 $
	m(m - 1)R, \quad \text{where } R = \frac{m - 2}{ma^2} > 1 - \frac{2}{m}.
	 $	Motivated by this, Q.-M. Cheng conducted further studies in which he obtained results without assuming any conditions on the scalar curvature. However, as he himself noted, the problem appeared to be very difficult,  he tried to solve it under certain topological and additional geometric conditions \cite{QMcheng2003compact},\cite{QMcheng2005compact}. Indeed, due to the highly technical computations involved, one inevitably encounters equations that cannot be resolved without imposing additional conditions on the hypersurface. For instance, in this study, the term $\langle A(\grad f),\grad f\rangle$ appeared frequently throughout the calculations. At this point, we overcame the difficulty by making use of the notion of biconservative submanifolds which have gained significant attention in the last few years due to their intriguing properties. In 2014, the notion of biconservative manifold was introduced by R. Caddeo et. al.\cite{caddeo2014surfaces} and has subsequently developed into an active and growing area of research. If we focus only on the studies about biconservative submanifolds in space forms, papers like \cite{forliterature0hasanis1995hypersurfaces,forliterature0turgay2015h,forliterature1montaldo2016biconservative,forlitre2turmontaldo2016proper,forlit3fetcu2015cmc,fetcu2021bochner} can give readers a good (though not complete) idea. In fact,  biconservative submanifolds come from the biharmonic submanifolds which generalizes the consept of minimals submanifolds and they are isometric immersion $\varphi: M^m\to N^n$ satisfying the biharmonic equation
	$$
	\tau_2 (\varphi) = - \Delta^\varphi \tau (\varphi) - \trace R^N (d\varphi (\cdot), \tau (\varphi)) d \varphi (\cdot) = 0,
	$$
	where $\Delta^\varphi$ is the rough Laplacian acting on sections of the pull-back bundle $\varphi ^{-1} \left ( TN^n \right )$, $R^N$ is the curvature tensor field on $N^n$, $\tau (\varphi) = m H$ is the tension field associated to $\varphi$ and $H$ is the mean curvature vector field. The biharmonic equation decomposes into the normal and tangent parts. The biconservative submanifolds are characterized by
	\begin{equation} \label{eq:tau2tangentVanishing}
		(\tau_2 (\varphi))^\top = 0.
	\end{equation}
	In the case of biconservative hypersurfaces in space forms, i.e. $\varphi:M^m\to N^{m+1}(c)$, then biconservativity condition corresponds to the following
		\begin{equation}\label{eq:BiconservativityCharacterization}
		A(\grad f) = - \frac m 2 f \grad f.
	\end{equation}
	It is seen that every hypersurface with constant mean curvature (CMC) are trivially is biconservative. Therefore, the main interest lies in the investigation of non-CMC biconservative hypersurfaces. For recent developments and detailed discussions on this subject, the reader is referred to \cite{recentchen2024}, \cite{contemporary2022}.
	
	In this paper we present two rigidity results for compact biconservative hypersurfaces in space forms: the first does not require the assumption of constant scalar curvature, and the second does not impose any condition on the sectional curvature.
			\begin{theorem}\label{Theorem 1}
		Let $\varphi:M^m\to N^{m+1}(c)$ be a compact non minimal biconservative hypersurface in space form $ N^{m+1}(c)$ with non negative sectional curvature. If $|A|^2\leq\frac{m^2f^2}{6}$ then $\nabla A=0$ and $\varphi(M)$ is one of the following hypersurfaces
		\begin{itemize}
			\item[(a)] The Euclidean hypersphere $S^m(r)$ of radius $r > 0$, if $c \in \{-1, 0\}$, i.e. $N$ is either the hyperbolic space $\mathbb{H}^{m+1}$ or the Euclidean space $\mathbb{R}^{m+1}$;
			
			\item[(b)] Either the small hypersphere $S^m(r)$, $r \in (0,1)$, or the standard product $S^{m_1}(r_1) \times S^{m_2}(r_2)$, where $r_1^2 + r_2^2 = 1$, $m_1 + m_2 = m$, and $r_1 > \sqrt{1/m}$, if $c = 1$, i.e. $N$ is the unit Euclidean sphere $S^{m+1}$.
		\end{itemize}
	\end{theorem}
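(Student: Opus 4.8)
The plan is to prove that the hypotheses force $\nabla A=0$, after which the list in (a)--(b) follows from the well-known classification of compact hypersurfaces of a space form with parallel second fundamental form. I would first reduce to the constant mean curvature case. On the open set $U=\{\grad f\neq 0\}$, the biconservativity relation \eqref{eq:BiconservativityCharacterization} makes $\grad f$ a principal direction with principal curvature $-\tfrac{m}{2}f$, so the remaining $m-1$ principal curvatures sum to $\tfrac{3m}{2}f$, and the Cauchy--Schwarz inequality gives on $U$
\[
|A|^2\ \ge\ \frac{m^2f^2}{4}+\frac{1}{m-1}\left(\frac{3m}{2}f\right)^2\ =\ \frac{m^2f^2}{4}\cdot\frac{m+8}{m-1},
\]
which contradicts $|A|^2\le\tfrac{m^2f^2}{6}$ at every point of $U$ where $f\neq 0$; hence $f\equiv 0$ on $U$, so $U=\varnothing$ and $f$ is constant, and by non-minimality $f$ is a nonzero constant. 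This is also where the paper's tensor does its work: from $\langle A(\grad f),\grad f\rangle=-\tfrac{m}{2}f|\grad f|^2$ one checks that $T:=fA-\tfrac{m}{4}f^2\,\id$ is a symmetric, divergence-free $(1,1)$ tensor, so the associated Cheng--Yau operator $\square_T\alpha=\langle T,\Hess\alpha\rangle$ is self-adjoint on the compact $M$ and $\int_M\square_T\alpha=0$; this is the device that routes the pinching into the argument without a constant-scalar-curvature hypothesis, and in particular it absorbs the otherwise uncontrollable term $\langle A(\grad f),\grad f\rangle$.

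With $f$ a nonzero constant I would then integrate Simons' formula for a hypersurface of $N^{m+1}(c)$,
\[
\tfrac{1}{2}\Delta|A|^2=|\nabla A|^2+\langle A,\Hess(mf)\rangle+mc\,(|A|^2-mf^2)+mf\,\trace(A^3)-|A|^4 ,
\]
over $M$: the Hessian term disappears and $\int_M\Delta|A|^2=0$, so
\[
0=\int_M\Big(\,|\nabla A|^2+mc\,(|A|^2-mf^2)+mf\,\trace(A^3)-|A|^4\,\Big).
\]
Here I would invoke the algebraic identity $mf\,\trace(A^3)-|A|^4=\sum_{i<j}\lambda_i\lambda_j(\lambda_i-\lambda_j)^2$ together with the Gauss equation, under which the hypothesis of non-negative sectional curvature says $c+\lambda_i\lambda_j\ge 0$ for all principal curvatures; it follows that $mf\,\trace(A^3)-|A|^4\ge-c\sum_{i<j}(\lambda_i-\lambda_j)^2=-mc\,(|A|^2-mf^2)$, so the integrand is $\ge|\nabla A|^2\ge 0$. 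Therefore $\nabla A=0$, and the two inequalities just used are forced to be equalities; in particular $\lambda_i\lambda_j=-c$ whenever $\lambda_i\neq\lambda_j$, so $M$ has at most two distinct (constant) principal curvatures.

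For the last step, a compact hypersurface of $N^{m+1}(c)$ with $\nabla A=0$ and at most two distinct principal curvatures is either totally umbilical, or --- only when $c=1$ --- a standard product $S^{m_1}(r_1)\times S^{m_2}(r_2)$ with $r_1^2+r_2^2=1$ (the two-principal-curvature models in $\mathbb{R}^{m+1}$ and $\mathbb{H}^{m+1}$ are non-compact generalized cylinders). Substituting the umbilical model $S^m(r)$ into $|A|^2\le\tfrac{m^2f^2}{6}$, and discarding totally geodesic hypersurfaces since $M$ is non-minimal, produces the spheres in (a) and the small hypersphere in (b); substituting the product model, whose principal curvatures are $r_2/r_1$ (multiplicity $m_1$) and $-r_1/r_2$ (multiplicity $m_2$), turns the pinching into a quadratic inequality in $(r_2/r_1)^2$ that forces in particular $r_1>\sqrt{1/m}$, while the minimal Clifford torus is excluded because $M$ is non-minimal.

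The step I expect to be the genuine obstacle is keeping the Bochner--Simons identity effective, i.e.\ controlling $\langle A,\Hess(mf)\rangle$ --- equivalently the quantity $\langle A(\grad f),\grad f\rangle$ that the introduction singles out --- which is exactly what defeats a direct Cheng--Yau argument when constant scalar curvature is not assumed; the divergence-freeness of $T$, a consequence of biconservativity, is what neutralizes it (here it collapses to $f$ being constant). After that, the delicate point is the sign of $mc\,(|A|^2-mf^2)+mf\,\trace(A^3)-|A|^4$ simultaneously for $c\in\{-1,0,1\}$, and most restrictively in $\mathbb{H}^{m+1}$, where $c+\lambda_i\lambda_j\ge 0$ confines the principal curvatures to a narrow range; the identity $mf\,\trace(A^3)-|A|^4=\sum_{i<j}\lambda_i\lambda_j(\lambda_i-\lambda_j)^2$ is what makes that sign analysis uniform across the three ambient geometries.
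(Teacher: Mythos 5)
Your proof is correct, but it reaches the conclusion by a genuinely different and considerably more elementary route than the paper. The crux is your first step: at any point where $\grad f \neq 0$, the biconservativity equation \eqref{eq:BiconservativityCharacterization} exhibits $-\tfrac{m}{2}f$ as a principal curvature, so already $|A|^2 \geq \tfrac{m^2f^2}{4}$ there (your Cauchy--Schwarz refinement over the remaining eigenvalues is not even needed); together with $|A|^2 \leq \tfrac{m^2f^2}{6}$ this forces $f = 0$, hence $A=0$, on the open set $\{\grad f \neq 0\}$, which must therefore be empty. This is a pointwise argument that uses neither compactness, nor non-negative sectional curvature, nor any integral identity, and it works for any pinching constant strictly below $\tfrac14$. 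The paper instead obtains $\grad f = 0$ as the terminal output of its main machinery: the divergence-free tensor $\phi = \psi_3\id - \psi_2 A + mfA^2 - A^3$ of Lemma \ref{div-free lemma}, the integral identities of Lemma \ref{integral equlities}, and the resulting inequality \eqref{huge int 2}, whose three non-negative terms are forced to vanish by the pinching and the curvature sign, followed by a case analysis on the (at most two) principal curvatures to exclude $\grad f \neq 0$. From the point where $f$ is a nonzero constant onward the two arguments coincide: integrating \eqref{eq:GeneralFormulaSubmanifolds} (your Simons formula, which via the Gauss equation and the identity $mf\trace A^3 - |A|^4 = \sum_{i<j}\lambda_i\lambda_j(\lambda_i-\lambda_j)^2$ is the same statement) gives $\nabla A = 0$ and $(\lambda_i-\lambda_j)^2(c+\lambda_i\lambda_j) = 0$, and the classification of compact hypersurfaces with parallel shape operator and at most two distinct principal curvatures yields (a) and (b). Two small remarks: the tensor $T = fA - \tfrac{m}{4}f^2\id$ you introduce is indeed divergence-free for biconservative hypersurfaces, but your argument never actually uses it, so that passage is decorative; and the final inequality $r_1 > \sqrt{1/m}$ is only asserted, whereas the paper carries out the computation explicitly (for the product $S^1(r_1)\times S^{m-1}(r_2)$). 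What your shortcut does not reproduce is the integral inequality \eqref{huge int 2} itself, which is the paper's advertised contribution and would be the tool of choice for pinching constants at or above $\tfrac14$, where the pointwise eigenvalue obstruction disappears.
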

	
	\begin{theorem}\label{Theorem 2}
Let $\varphi:M^m\to \mathbb{R}^{m+1}$ be a compact non minimal biconservative hypersurface. If $m\geq 7$ and $|A|^2\leq \frac{m^2f^2}{m-1}$   then $\nabla A=0$ and $\varphi(M)$ is congruent to the hypersphere $\mathbb{S}^m(r)$ of radius $r$.
	\end{theorem}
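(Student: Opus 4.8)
The plan is to use the biconservativity equation \eqref{eq:BiconservativityCharacterization} to first show that the mean curvature $f$ must be constant, and then to run a Simons-type argument in the resulting constant mean curvature case. The two facts I would draw from \eqref{eq:BiconservativityCharacterization} are: (i) wherever $\grad f\neq 0$ the vector $\grad f$ is an eigenvector of the shape operator $A$ with eigenvalue $-\tfrac m2 f$, so that $|A|^2\ge\tfrac{m^2}{4}f^2$ at such points — this is precisely the mechanism by which biconservativity controls the otherwise awkward term $\ang{A(\grad f)}{\grad f}=-\tfrac m2 f|\grad f|^2$; and (ii) the symmetric $(1,1)$-tensor $fA-\tfrac m4 f^2\,\id$ is divergence-free, since
\[
\Div\!\Bigl(fA-\tfrac m4 f^2\,\id\Bigr)=A(\grad f)+f\,\Div A-\tfrac m2 f\grad f=-\tfrac m2 f\grad f+mf\grad f-\tfrac m2 f\grad f=0,
\]
using $\Div A=m\grad f$ (the trace of the Codazzi equation) and \eqref{eq:BiconservativityCharacterization}. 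The Cheng--Yau operator attached to this tensor, together with the operator $\square$ of the Preliminaries, is self-adjoint on the compact $M$ and will package the integrations by parts below.

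The decisive step is the reduction to constant mean curvature. Suppose $f$ is non-constant. Then $U:=\{\grad f\neq 0\}$ is non-empty and open, and $f$ cannot vanish identically on $U$ (otherwise $\grad f$ would vanish on the open set $U$), so there is $p\in U$ with $f(p)\neq 0$. At $p$, the bound $|A|^2\ge\tfrac{m^2}{4}f^2$ together with the hypothesis $|A|^2\le\tfrac{m^2f^2}{m-1}$ gives $\tfrac{m^2}{4}f(p)^2\le\tfrac{m^2}{m-1}f(p)^2$, hence $m\le 5$, contradicting $m\ge 7$. Therefore $f$ is constant, and non-minimality forces $f\neq 0$.

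With $f$ constant one is in the CMC regime, so $\Hess(mf)=0$ and, passing to the traceless operator $\phi=A-f\,\id$ (so $|\phi|^2=|A|^2-mf^2$ and $\nabla A=\nabla\phi$), Simons' identity for a CMC hypersurface of $\mathbb R^{m+1}$ becomes
\[
\tfrac12\Delta|\phi|^2=|\nabla\phi|^2+mf\,\trace\phi^3-|\phi|^2\bigl(|\phi|^2-mf^2\bigr).
\]
Combining this with Okumura's inequality $|\trace\phi^3|\le\tfrac{m-2}{\sqrt{m(m-1)}}|\phi|^3$ yields $\tfrac12\Delta|\phi|^2\ge|\nabla\phi|^2-|\phi|^2P(|\phi|)$, where $P(t)=t^2+\tfrac{m(m-2)}{\sqrt{m(m-1)}}|f|\,t-mf^2$. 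A short computation shows $\sqrt{m/(m-1)}\,|f|$ is exactly the positive root of $P$; since $|A|^2\le\tfrac{m^2f^2}{m-1}$ is precisely $|\phi|\le\sqrt{m/(m-1)}\,|f|$, one gets $P(|\phi|)\le 0$ everywhere. Integrating over the compact $M$ kills the left-hand side and leaves $\int_M|\nabla\phi|^2\le 0$, so $\nabla\phi=0$, i.e. $\nabla A=0$.

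Finally, a compact hypersurface of $\mathbb R^{m+1}$ with parallel second fundamental form is, by the classical classification of such hypersurfaces, totally geodesic or an open part of a standard product $\mathbb S^k(r)\times\mathbb R^{m-k}$ with $1\le k\le m$; non-minimality excludes the totally geodesic case and compactness forces $k=m$, so $M$ is totally umbilical and complete, whence $\varphi(M)$ is congruent to the round hypersphere $\mathbb S^m(r)$. I expect the crux to be the reduction step — reading the sharp bound $|A|^2\ge\tfrac{m^2}{4}f^2$ out of \eqref{eq:BiconservativityCharacterization} and seeing it is incompatible with the pinching unless $f$ is constant — together with noticing that the constant $\tfrac{m^2}{m-1}$ is exactly the value at which Okumura's inequality returns $\nabla A=0$ in the CMC case; the divergence-free tensor $fA-\tfrac m4 f^2\,\id$ and the operator $\square$ then only streamline the Stokes-theorem bookkeeping.
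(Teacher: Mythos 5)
Your proof is correct, but it takes a genuinely different route from the paper's. The paper deduces Theorem \ref{Theorem 2} from Theorem \ref{Theorem 1}: for $m\ge 7$ the pinching $|A|^2\le \frac{m^2f^2}{m-1}$ implies $|A|^2\le\frac{m^2f^2}{6}$ and, via the Okumura-type Lemma \ref{lemma: li adapted} with $c=0$, the non-negativity of $\sum_{i,j}(\lambda_i-\lambda_j)^2R_{ijij}$, which substitutes for the sectional-curvature hypothesis; the heavy lifting is then done by the divergence-free tensor of Lemma \ref{div-free lemma} and the integral inequality \eqref{huge int 2}. You bypass all of that. Your key observation --- that \eqref{eq:BiconservativityCharacterization} makes $-\frac m2 f$ an eigenvalue of $A$ wherever $\grad f\neq 0$, so $|A|^2\ge\frac{m^2}{4}f^2$ there, which is incompatible with $|A|^2\le\frac{m^2}{m-1}f^2$ at any such point with $f\neq 0$ once $m\ge 6$, and such a point must exist on the open set $\{\grad f\neq 0\}$ if it is non-empty --- reduces the theorem in a few lines to the CMC case. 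There the constant $\frac{m^2}{m-1}$ is exactly the classical Alencar--do Carmo/Okumura threshold: I checked that $\sqrt{m/(m-1)}\,|f|$ is the positive root of your polynomial $P$ and that your Simons identity for $c=0$ is the correct specialization of \eqref{eq:GeneralEquationYau}, so integration over the compact $M$ does give $\nabla A=0$, and the classification of Euclidean hypersurfaces with parallel shape operator together with compactness and non-minimality isolates $\mathbb{S}^m(r)$. Two remarks: the divergence-free tensor $fA-\frac m4 f^2\,\id$ you introduce is correct but never actually needed in your argument; and your pointwise reduction applies verbatim under the hypothesis $|A|^2\le\frac{m^2f^2}{6}$ of Theorem \ref{Theorem 1} as well (since $\frac16<\frac14$), so it already forces $f$ to be constant there too. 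What your route buys is brevity and self-containedness; what the paper's machinery is really aimed at are pinchings whose constant exceeds $\frac{m^2}{4}$, where this elementary mechanism is unavailable.
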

		We would like to highlight the novelty of our approach: we introduce a divergence-free, symmetric $(1,1)-$tensor field (see Lemma \ref{div-free lemma}) that is valid for any hypersurface in space forms and establish an integral identity using biconservativity (see Lemma \ref{integral equlities}). By applying the Cheng–Yau operator, we obtain an integral equality (see \eqref{huge int equality to solve}), which we then analyze. We hope that this method can be further developed to yield more results for such hypersurfaces without assuming the condition of constant scalar curvature.
	\section{Preliminaries}
	
Throughout this paper, all manifolds are assumed to be connected, oriented, and of dimension at least two. Unless otherwise specified, the Riemannian metric on a given manifold is denoted by $\langle \cdot, \cdot \rangle$, or omitted when clear from context. The Levi-Civita connection of a Riemannian manifold $M$ is denoted by $\nabla$.

The rough Laplacian acting on sections of the pullback bundle $\varphi^{-1}(TN)$ is defined by
$$
\Delta^\varphi = - \trace \left ( \nabla ^\varphi \nabla ^\varphi - \nabla ^\varphi _\nabla \right ),
$$
where $\nabla^\varphi$ denotes the induced connection on the pullback bundle. The curvature tensor field is given by
$$
R (X, Y) Z = \left [ \nabla _X, \nabla _Y \right ] Z - \nabla _{[X, Y]} Z.
$$
Given a hypersurface $\varphi : M^m \to N^{m+1}$, the mean curvature function is defined by $f =\frac 1m \trace A$, where $A = A_\eta$ denotes the shape operator associated with a unit normal vector field $\eta$.

 We recall some fundamental results and formulas related to hypersurfaces in space forms, as well as properties of the Cheng–Yau operator. In particular, we present the Gauss and Codazzi equations for a hypersurface $M^m$ immersed in a space form $N^{m+1}(c)$. 
	\begin{itemize}
		\item The Gauss Equation is
		\begin{equation}\label{GaussEquation}
			R(X, Y) Z = c \left ( \left \langle Y, Z \right \rangle X - \left \langle X, Z \right \rangle Y \right ) + \left \langle A(Y), Z \right \rangle A(X) - \left \langle A(X), Z \right \rangle A(Y)
		\end{equation}
		for any $X, Y, Z \in C(TM)$.
		
		\item The Codazzi Equation is
		\begin{equation*} \label{CodazziEquation}
			\left ( \nabla _X A\right ) (Y) = \left ( \nabla _Y A \right ) (X)
		\end{equation*}
		for any $X, Y \in C(TM)$.
	\end{itemize}
	
	The scalar curvature $m(m-1)R$ of $M$ is expressed as 
	\begin{equation} \label{eq:formulaScalarCurvature}
		m (m - 1) (R-1) = m^2 f^2 - |A|^2
	\end{equation}	
	
Now, we recall several well-known properties of the shape operator of hypersurfaces in space forms, which will be used in the subsequent analysis
	
	\begin{lemma}\label{symmetric of A}
		Let $\varphi : M^m \to  N^{m+1}(c)$ be hypersurface in a space form. Then
		\begin{enumerate}[label = \alph*)]
			\item $(\nabla A) (\cdot, \cdot)$ is symmetric,
			\item $\left \langle (\nabla A) (\cdot, \cdot), (\cdot) \right \rangle$ is totally symmetric,
			\item $\trace (\nabla A) (\cdot, \cdot) = m \grad f$.
		\end{enumerate}
	\end{lemma}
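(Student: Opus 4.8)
The plan is to establish the three assertions of Lemma \ref{symmetric of A} directly from the Gauss and Codazzi equations, using only the fact that $A$ is a symmetric $(1,1)$-tensor and that the ambient space form has constant sectional curvature.

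For part (a), recall that $(\nabla A)(X,Y) = (\nabla_X A)(Y) = \nabla_X(AY) - A(\nabla_X Y)$; symmetry in $X$ and $Y$ is precisely the Codazzi equation stated above, so nothing further is needed here. For part (b), I would introduce the fully covariant version $T(X,Y,Z) := \langle (\nabla_X A)(Y), Z\rangle$. It is already symmetric in the first two slots $X,Y$ by part (a). To get the full symmetry it then suffices to show symmetry in the last two slots $Y,Z$, i.e. $\langle (\nabla_X A)(Y), Z\rangle = \langle (\nabla_X A)(Z), Y\rangle$. This follows from the fact that $A$ is self-adjoint for the metric and $\nabla$ is metric-compatible: differentiating $\langle AY, Z\rangle = \langle Y, AZ\rangle$ in the direction $X$ and subtracting the terms $\langle A(\nabla_X Y), Z\rangle$, $\langle \nabla_X Y, AZ\rangle$, etc., yields $\langle (\nabla_X A)Y, Z\rangle = \langle Y, (\nabla_X A)Z\rangle = \langle (\nabla_X A)Z, Y\rangle$, again using that $\nabla_X A$ is self-adjoint (which is itself an immediate consequence of metric compatibility applied to the symmetry of $A$). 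Combining symmetry in the first two slots with symmetry in the last two gives total symmetry.

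For part (c), I would take the trace of $(\nabla A)(\cdot,\cdot)$ over its two arguments with respect to an orthonormal frame $\{e_i\}$: $\trace (\nabla A)(\cdot,\cdot) = \sum_i (\nabla_{e_i} A)(e_i)$. Using the Codazzi equation to swap the arguments, and then comparing with $\sum_i (\nabla_{e_i} A)(e_i)$ contracted against an arbitrary $X$, one gets $\langle \sum_i (\nabla_{e_i}A)(e_i), X\rangle = \sum_i \langle (\nabla_{e_i}A)(X), e_i\rangle = \sum_i \langle \nabla_{e_i}(AX) - A(\nabla_{e_i}X), e_i\rangle$, which after using part (b) to move the trace onto the third slot becomes $\sum_i e_i\langle AX, e_i\rangle - \dots = X(\trace A) = X(mf) = m\,\langle \grad f, X\rangle$. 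Since $X$ is arbitrary this proves $\trace(\nabla A)(\cdot,\cdot) = m\grad f$.

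I do not expect any genuine obstacle here: all three parts are standard consequences of the Codazzi equation together with metric compatibility and self-adjointness of $A$, and the only mild care needed is bookkeeping the covariant-derivative terms and the choice of a (geodesic or arbitrary) orthonormal frame when computing traces. The Gauss equation is not even required for this lemma; it is the Codazzi equation that does all the work in (a) and (c), while (b) is purely algebraic once (a) is in hand.
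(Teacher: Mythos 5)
Your argument is correct: parts (a)--(c) follow, exactly as you say, from the Codazzi equation together with metric compatibility and the self-adjointness of $A$ (which makes each $\nabla_X A$ self-adjoint and hence gives symmetry in the last two slots, whence total symmetry), and (c) is obtained by swapping arguments via Codazzi and using $\trace(\nabla_X A) = X(\trace A) = X(mf)$. The paper states this lemma without proof, recalling it as a well-known fact, so there is no argument to compare against; yours is the standard one and is sound, the only cosmetic point being that in (c) the cleanest route is to apply Codazzi first and then trace, rather than expanding $\nabla_{e_i}(AX) - A(\nabla_{e_i}X)$ termwise.
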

	\begin{remark}
		Notice that Lemma \ref{symmetric of A} allows us to write 
		 $$\left \langle (\nabla A) (X, AY), Z \right \rangle = \left \langle (\nabla A) (X, Z), AY \right \rangle=\left \langle (\nabla A) (AY, X), Z \right \rangle $$
		 for any $X, Y, Z \in C(TM)$. It is important to emphasize this observation, as the further computations fundamentally rely on this approach (see Lemma \ref{main lemma1}).
	\end{remark}
	Let $\varphi : M^m \to N^{m+1} (c)$ be a hypersurface in a space form. We denote by $\{ \lambda_i \}_{i \in \overline {1, m}}$ the principal curvatures of $M$, that is the eigenvalue functions of the shape operator. While these functions are continuous in general, they may fail to be smooth on $M$. To study the structure more effectively, we consider the subset $M_A\subset M$ consisting of all points where the number of distinct principal curvatures remains locally constant. It is known that $M_A$ is open and dense in $M$. On a connected component of $M_A$, which is an open subset of $M$, the principal curvatures are smooth functions and there exists a local orthonormal frame field $\{E_i\}_{i \in \overline {1, m}}$ such that $A(E_i) = \lambda_i E_i$, for any $i \in \overline {1, m}$. When we consider the distinct principal curvatures of $A$, we denote their multiplicities by $m_1, \ldots, m_\ell$, where $\ell$ is the number of distinct principal curvatures.
	
	In the context of investigating the geometric properties of biconservative hypersurfaces, we make use of the following identity, which is valid for any hypersurface 	$\varphi : M^m \to N^{m+1} (c)$  in a space form.
	\begin{equation}\label{eq:GeneralFormulaSubmanifolds}
-\frac 12 |A|^2=|\nabla A|^2+\langle A,\Hess mf\rangle+\frac 1 2 \sum _{i, j = 1} ^m (\lambda_i - \lambda_j)^2 R_{ijij}
	\end{equation}
	In fact, this formula can be derived as a direct application of the Cheng–Yau formula
	\begin{equation} \label{eq:GeneralEquationYau}
	-	\frac 1 2 \Delta |\varPhi|^2 = |\nabla \varPhi|^2 + \langle \varPhi, \Hess (\trace \varPhi) \rangle + \frac 1 2 \sum _{i, j = 1} ^m (\mu_i - \mu_j)^2 R_{ijij},
	\end{equation}
	where $\varPhi$ is a symmetric $(1, 1)$ tensor field on an arbitrary manifold $M$ which satisfies the Codazzi equation, i.e. $(\nabla \varPhi) (X, Y) = (\nabla \varPhi) (Y, X)$, and $\mu_i$'s are the eigenvalues of $\varPhi$, for more see \cite{ChengYau1977}.
	
	Moereover, in the same work \cite{ChengYau1977} another important tool was introduced: the Cheng-Yau operator $\square$ associated to a symmetric $(1, 1)$ tensor field $\varPhi$. For any function $\gamma \in C^2 (M)$, $\square \gamma$ is defined by
	\begin{equation*}
		\square \gamma = \langle \varPhi, \Hess \gamma \rangle.
	\end{equation*} 
    In the case of  $\varPhi$ is a divergence-free tensor field defined on a compact manifold, then $\square$ is self-adjoint with respect to the $L^2$ inner product, i.e.
	\begin{equation*}
		\int _M \gamma (\square \theta) \ v_g = \int _M \theta (\square \gamma) \ v_g.
	\end{equation*}
	A direct consequence is that on compact manifolds 
	\begin{equation} \label{eq:consequenceChengYauSelfAdjoint}
		\int _M \square \gamma \ v_g = 0,
	\end{equation}
	for any divergence-free $(1, 1)$ tensor $\varPhi$.
	
	Now, we recall a few things about the stress-bienergy tensor $S_2$. Let $\phi :(M^m,g) \to \left ( N^n, \tilde{g} \right )$ be a smooth map, where $\tilde{g}$ is a Riemannian metric on $N$. Assume that $M$ is compact and on the set of all Riemannian metrics $g$ defined on $M$, one can define a new functional 
	\begin{equation*}
		\tilde E_2 (g) = \frac 1 2 \int _M \tilde{g}( \tau (\phi), \tau (\phi) )\ v_g,
	\end{equation*}
	where $\tau (\phi) = \trace_g \nabla d\phi$. The critical points of this functional are characterized by the vanishing of the stress-bienergy tensor $S_2$, see \cite{LoubeauMontaldoOniciuc2008}, where
	\begin{equation*}
		S_2 (X, Y) = \frac 12 |\tau (\phi)|^2 \langle X, Y \rangle + \langle d \phi, \nabla \tau (\phi) \rangle \langle X, Y \rangle - \langle d\phi (X), \nabla _Y \tau (\phi) \rangle - \langle d\phi (Y), \nabla _X \tau (\phi) \rangle,
	\end{equation*}
	for any $X, Y \in C(TM)$. The tensor field $S_2$ satisfies
	\begin{equation*}
		\Div S_2 = - \langle \tau_2 (\phi), d\phi \rangle,
	\end{equation*}
	see \cite{Jiang1986-2}.
	
	The biconservative submanifolds are defined by $\Div S_2 = 0$. In the case of hypersurfaces, the stress-bienergy tensor is given by
	\begin{equation*}
		S_2 = - \frac {m^2} 2 f^2 \id + 2mf A.
	\end{equation*}
	
	The following result, due to Ş. Andronic and the present author \cite{andronic2025}, provided a new characterization of biconservative hypersurfaces in space forms.
	
	\begin{lemma} \label{f2A}
	Let $M^m$ be a hypersurface in a space form $N^{m+1} (c)$. Then $\Div S_2 = 0$ if and only if $\Div \left ( f^2 A \right ) = 0$.
	\end{lemma}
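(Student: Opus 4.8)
The plan is to compute $\Div S_2$ and $\Div(f^2A)$ explicitly and to observe that both reduce to the single pointwise vector identity $W := A(\grad f) + \tfrac m2\, f\,\grad f = 0$. First I would fix a local orthonormal frame $\{E_i\}$ and use $\Div T = \sum_i (\nabla_{E_i}T)(E_i)$ for a symmetric $(1,1)$ tensor $T$, together with Lemma~\ref{symmetric of A}(c), which gives $\sum_i(\nabla_{E_i}A)(E_i) = m\grad f$. Since $(\nabla_X(hT))(Y) = X(h)\,T(Y) + h\,(\nabla_X T)(Y)$ for a smooth function $h$, a short computation yields
\begin{equation*}
\Div(f^2\id) = \grad(f^2) = 2f\grad f, \qquad \Div(fA) = A(\grad f) + f\textstyle\sum_i(\nabla_{E_i}A)(E_i) = A(\grad f) + mf\grad f,
\end{equation*}
and, recalling that $S_2 = -\tfrac{m^2}{2}f^2\id + 2mfA$,
\begin{equation*}
\Div S_2 = -m^2 f\grad f + 2m\bigl(A(\grad f) + mf\grad f\bigr) = 2m\,W.
\end{equation*}
The same computation gives $\Div(f^2A) = A(\grad(f^2)) + f^2\sum_i(\nabla_{E_i}A)(E_i) = 2fA(\grad f) + mf^2\grad f = 2f\,W$.

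From $\Div S_2 = 2mW$ and $\Div(f^2A) = 2fW$ the forward implication is immediate: if $\Div S_2 = 0$ then $W\equiv 0$, hence $\Div(f^2A) = 2fW \equiv 0$. For the converse I would argue as follows. Suppose $\Div(f^2A) = 0$, i.e.\ $fW\equiv 0$ on $M$. On the open set $U = \{f\neq 0\}$ dividing by $f$ gives $W = 0$. On $\operatorname{int}\{f=0\}$ the function $f$ is locally constant, so $\grad f = 0$ there and hence $W = A(0)+0 = 0$. Therefore $W$ vanishes on the open set $U\cup\operatorname{int}\{f=0\}$, whose complement equals the topological boundary $\partial U$ and therefore has empty interior; consequently $U\cup\operatorname{int}\{f=0\}$ is dense in $M$. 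Since $W$ is a continuous vector field, it vanishes on all of $M$, and therefore $\Div S_2 = 2mW = 0$.

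The algebraic part is routine bookkeeping with the divergence of a $(1,1)$ tensor once Lemma~\ref{symmetric of A}(c) is available. The only step that is not purely formal — and the one I expect to be the main obstacle — is the last one: one cannot pass from $fW\equiv 0$ to $W\equiv 0$ by naive division, because $f$ may vanish on a set with nonempty interior (for instance on a minimal piece of $M$), so the density and continuity argument above is essential.
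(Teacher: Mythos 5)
Your proof is correct: the identities $\Div S_2 = 2mW$ and $\Div(f^2A) = 2fW$ with $W = A(\grad f) + \tfrac m2 f\,\grad f$ check out against the paper's formulas for $S_2$ and $\trace(\nabla A)(\cdot,\cdot) = m\grad f$, and your density-plus-continuity argument properly handles the only delicate point, namely that one cannot simply divide $fW\equiv 0$ by $f$. Note that the paper itself does not prove this lemma but cites it from \cite{andronic2025}; your computation is the natural one and is surely essentially the argument of that reference.
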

In the same work \cite{andronic2025}, the authors also established the following integral identity for such  hypersurfaces as a consequence of Lemma \ref{f2A}  by using Cheng-Yau operator $\square$ associated to tensor field $f^2A$, i.e.
	\begin{equation}\label{int equ from firt paper}
		-\frac 12\int_M f^2\Delta|A|^2 v_g=\int_M f^2\biggl\{|\nabla A|^2+\frac 12\sum_i(\lambda_i-\lambda_j)^2R_{ijij}\biggr\}\ v_g.
	\end{equation}
	
	\section{Biconservative hypersurfaces without the assumption of constant scalar curvature} \label{sec:mLess10}
	First, we recall some examples of divergence-free $(1,1)$-tensors. Let $\Ric$ denote the Ricci tensor of a Riemannian manifold $M$. Then the tensor
	$$	T_1 = \tfrac{1}{2} m(m-1)R\ \id - \Ric$$
	is divergence-free.
	
	Another example is given by
	$$
	T_2 = (\trace S)\id - S,
	$$
	where $S$ is a symmetric $(1,1)$-tensor that satisfies the Codazzi equation.
	
	In the special case where $M$ is a hypersurface in a space form, $T_2$ coincides with
	\[
	T_2 = mf\, \mathrm{Id} - A,
	\]
	where $f$ denotes the mean curvature function and $A$ is the shape operator. Interestingly, when $M$ is biconservative hypersurface in a space form then $$T_3=f^2A$$ is divergence-free. However, these tensors $T_1,T_2$ and $T_3$ are not sufficient to relax the condition of constant scalar curvature.

	The strategy of this section is  to define a new divergence-free tensor and then to apply the Cheng–Yau operator associated with it. By taking advantage of the compactness of the hypersurface, the resulting integral expression vanishes, which leads to a nontrivial and extended integral equality (refer to  equation \eqref{huge int equality to solve}). The arguments required to analyze  this integral equality will be presented in the form of lemmas.
	
	The first lemma constitutes the main foundation of this study, and its results are stated in general for arbitrary dimensions $n$. In this paper, the cases $n = 2$ and $n = 3$ will play a central role in the resolution of the fundamental integral equality.
	
	\begin{lemma}\label{main lemma1}
	Let $\varphi : M^m \to  N^{m+1}(c)$ be hypersurface in a space form. Then
	\begin{enumerate}
		\item \label{grad trAn}$ \frac 1n \grad \trace A^n=\trace (\nabla A)(\cdot,A^{n-1}\cdot)$
		\item\label{nabla An}$(\nabla A^n)(X,Y)=(\nabla A)(X,A^{n-1}Y)+A((\nabla A^{n-1})(X,Y))$
	\end{enumerate}

where $A^n=\underbrace{A A \cdots A}_{n\text{ times}}$ and $X,Y$ tangent to $M$.
	\end{lemma}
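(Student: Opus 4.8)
The plan is to establish the two identities in the order \eqref{nabla An}, then \eqref{grad trAn}. The first is a Leibniz-type rule for the covariant derivative of a composition of $(1,1)$-tensors and is purely algebraic (it does not use the space form structure); the second will be deduced from it together with the total symmetry in part b) of Lemma \ref{symmetric of A}.

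For \eqref{nabla An}, I write $A^n=A\circ A^{n-1}$ and, for tangent fields $X,Y$, add and subtract the term $A\big(\nabla_X(A^{n-1}Y)\big)$:
\begin{align*}
(\nabla A^n)(X,Y)&=\nabla_X\big(A^nY\big)-A^n(\nabla_X Y)\\
&=\Big[\nabla_X\big(A(A^{n-1}Y)\big)-A\big(\nabla_X(A^{n-1}Y)\big)\Big]+A\Big[\nabla_X(A^{n-1}Y)-A^{n-1}(\nabla_X Y)\Big]\\
&=(\nabla A)(X,A^{n-1}Y)+A\big((\nabla A^{n-1})(X,Y)\big),
\end{align*}
which is \eqref{nabla An}. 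Iterating this (a one-line induction on $n$) yields the expansion $\nabla_X A^n=\sum_{k=0}^{n-1}A^k\circ(\nabla_X A)\circ A^{n-1-k}$, which will be fed into the proof of \eqref{grad trAn}.

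To prove \eqref{grad trAn}, I use that covariant differentiation commutes with metric contraction, so that $X(\trace A^n)=\trace(\nabla_X A^n)$ for every tangent vector $X$. Inserting the expansion above and invoking the cyclicity of the trace together with $A^{n-1-k}A^{k}=A^{n-1}$, all $n$ summands contribute equally, giving $X(\trace A^n)=n\,\trace\big((\nabla_X A)\circ A^{n-1}\big)$. Expanding this trace in a local orthonormal frame $\{E_i\}$ and using that $(U,V,W)\mapsto\langle(\nabla A)(U,V),W\rangle$ is totally symmetric (part b) of Lemma \ref{symmetric of A}, see also the Remark) to interchange the $X$-slot with the $E_i$-slot, one obtains
\begin{equation*}
\langle\grad\trace A^n,X\rangle=X(\trace A^n)=n\sum_{i=1}^{m}\big\langle(\nabla_{E_i}A)(A^{n-1}E_i),X\big\rangle=n\,\big\langle\trace(\nabla A)(\cdot,A^{n-1}\cdot),X\big\rangle.
\end{equation*}
Since $X$ is arbitrary, this is precisely \eqref{grad trAn}; as a check, for $n=1$ it reduces to $\grad(mf)=\trace(\nabla A)(\cdot,\cdot)$, consistent with part c) of Lemma \ref{symmetric of A}.

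The computations are routine; the only point requiring care is the final manipulation, where the slot carrying the vector $A^{n-1}E_i$ is kept fixed while total symmetry is used solely to swap the $X$- and $E_i$-slots. This is legitimate because part b) of Lemma \ref{symmetric of A} is a pointwise statement valid for arbitrary tangent vectors, in particular for the $A^{n-1}E_i$. One could instead prove \eqref{grad trAn} directly on the open dense set $M_A$ in a frame of principal directions, where $\trace A^n=\sum_i\lambda_i^{\,n}$ and the identity drops out after differentiating and regrouping, but this route is computationally messier and I would avoid it.
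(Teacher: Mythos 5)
Your proof is correct. For identity \eqref{nabla An} you use exactly the paper's argument: write $A^n=A\circ A^{n-1}$ and add and subtract $A\big(\nabla_X(A^{n-1}Y)\big)$, so there is nothing to compare there. For identity \eqref{grad trAn}, however, your route is genuinely different. The paper proves \eqref{grad trAn} directly, by expanding $\trace(\nabla A)(\cdot,A^{n-1}\cdot)$ in a local orthonormal frame of principal directions, using total symmetry to move slots, and reducing everything to $\sum_{i,j}\lambda_i^{n-1}E_j(\lambda_i)E_j=\frac1n\grad\sum_i\lambda_i^n$ --- i.e.\ essentially the eigenvalue computation you dismiss in your last paragraph as the messier alternative. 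You instead derive \eqref{grad trAn} \emph{from} \eqref{nabla An}: iterate the Leibniz rule to get $\nabla_XA^n=\sum_{k=0}^{n-1}A^k\circ(\nabla_XA)\circ A^{n-1-k}$, use cyclicity of the trace to collapse the sum to $n\,\trace\big((\nabla_XA)\circ A^{n-1}\big)$, and invoke total symmetry (Lemma \ref{symmetric of A}\,b)) only once, to swap the $X$- and $E_i$-slots. This buys you two things: the logical dependence of \eqref{grad trAn} on \eqref{nabla An} is made explicit (the paper proves the two parts independently), and you never diagonalize $A$, so the argument holds verbatim on all of $M$ rather than only on the open dense set $M_A$ where the principal curvatures and a principal frame are smooth --- the paper's use of $E_j(\lambda_i)$ is a priori valid only there and tacitly relies on density to conclude on $M$. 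The paper's computation is shorter on the page; yours is cleaner and more robust. Both ultimately rest on the same two ingredients, the Codazzi-type total symmetry and metric compatibility of $\nabla$.
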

	\begin{proof}
		Let $\{E_i\}_{i \in \overline {1, m}}$ be a local orthonormal frame field  on $M$  such that $A(E_i) = \lambda_i E_i$, for any $i \in \overline {1, m}$. Then we have
		
		\eqref{grad trAn}$\Leftrightarrow$
		\begin{eqnarray*}
			\trace(\nabla A)(\cdot,A^{n-1}\cdot)&=&\sum_{i=1}^m(\nabla A)(E_i,A^{n-1}E_i)\\
			&=&\sum _{i, j = 1} ^m\langle (\nabla A)(E_i,A^{n-1}E_i),E_j\rangle E_j\\
			&=&\sum _{i, j = 1} ^m\langle (\nabla A)(E_j,E_i),A^{n-1}E_i  \rangle E_j\\
			&=&\sum _{i, j = 1} ^m\langle \nabla_{E_j}(AE_i)-A(\nabla_{E_j}E_i),A^{n-1}E_i\rangle E_i\\
			&=&\sum _{i, j = 1} ^m\langle \nabla_{E_j}(AE_i),A^{n-1}E_i \rangle E_j\\
			&=&\sum _{i, j = 1} ^m\lambda_i^{n-1}{E_j}\langle AE_i,E_i  \rangle E_j\\
			&=&\sum _{i, j = 1} ^m\lambda_i^{n-1}E_j(\lambda_i) E_j\\
			&=& \frac 1n \grad\trace A^n.
		\end{eqnarray*} 
		
		 Now, for the second,  note that $(\nabla_XA)(A^{n-1}Y)=\nabla_X(A(A^{n-1}Y))-A(\nabla_X(A^{n-1}Y))$. Then,
		 
		 \eqref{nabla An}$\Leftrightarrow$
		\begin{eqnarray*}
(\nabla A^n)(X,Y)&=&\nabla_X(A^nY)-A^n(\nabla_XY)\\
				&=&\nabla_X(A(A^{n-1}Y))-A(A^{n-1}(\nabla_XY))+ \bigg(A(\nabla_XA^{n-1}(Y))-A(\nabla_X(A^{n-1}Y))\bigg)\\
				&=&\nabla_X(A(A^{n-1}Y))-A(\nabla_X(A^{n-1}Y))+A(\nabla_XA^{n-1}(Y))-A(A^{n-1}(\nabla_XY))\\
				&=&(\nabla A)(X,A^{n-1}Y)+A((\nabla A^{n-1})(X,Y)).
		\end{eqnarray*}
		This technique of the last computation is essentially due to Ş. Andronic.
	\end{proof}
	From now on  we  denote $\trace A^2=\vert A\vert^2$. Then it is obvious that we have
	\begin{eqnarray}
	\label{gradnorm A}	\frac 12 \grad\vert A\vert^2&=&\trace(\nabla A)(\cdot,A\cdot),\\
	\frac 13\grad\trace A^3&=&\trace (\nabla A)(\cdot,A^2\cdot).
	\end{eqnarray}
	In a Riemannain manifold, divergence of a symmetric and $(1,1)-$tensor field $T$ is given by
	$$\Div T=\trace(\nabla T)(\cdot,\cdot).$$
So, because of $A^2$ and $A^3$ are symmetric and $(1,1)$ tensor, we can give the following lemma which is direct consequence of Lemma \ref{main lemma1}.
	\begin{lemma}\label{divA2A3}
Let $\varphi:M\to N^{m+1}(c)$ be a  hypersurface in space form $ N^{m+1}(c)$. Then the following identities hold:
\begin{eqnarray}
\label{divA2}\Div A^2&=&\frac 12\grad\vert A\vert^2+mA(\grad f)\\
\label{divA3}\Div A^3&=&\frac 13\grad \trace A^3+\frac 12 A(\grad\vert A\vert^2)+mA^2(\grad f)
\end{eqnarray}
	\end{lemma}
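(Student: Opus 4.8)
The plan is to unwind the divergence through the formula $\Div T = \trace (\nabla T)(\cdot,\cdot)$, recalled just above the statement, and then to feed in the product-type identity of Lemma~\ref{main lemma1}\eqref{nabla An}. The one structural remark that makes everything go through is that $A$ is $C^\infty(M)$-linear on sections of $TM$, hence commutes with tracing a $(1,1)$-tensor-valued bilinear form: for any symmetric $(1,1)$-tensor $S$ one has $\sum_i A\big((\nabla S)(E_i,E_i)\big) = A\big(\trace (\nabla S)(\cdot,\cdot)\big) = A(\Div S)$ in a local orthonormal frame $\{E_i\}$. With this observation both identities become substitutions.

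For \eqref{divA2} I would take $n=2$ in Lemma~\ref{main lemma1}\eqref{nabla An}, so that $(\nabla A^2)(X,Y) = (\nabla A)(X,AY) + A\big((\nabla A)(X,Y)\big)$, and then trace over $\{E_i\}$. The first summand contributes $\trace (\nabla A)(\cdot,A\cdot) = \tfrac12 \grad |A|^2$ by \eqref{gradnorm A}, while the second contributes $A\big(\trace (\nabla A)(\cdot,\cdot)\big) = A(m\grad f) = mA(\grad f)$ by Lemma~\ref{symmetric of A}(c). Adding the two gives \eqref{divA2}.

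For \eqref{divA3} I would take $n=3$ instead, giving $(\nabla A^3)(X,Y) = (\nabla A)(X,A^2Y) + A\big((\nabla A^2)(X,Y)\big)$. Tracing, the first summand becomes $\trace (\nabla A)(\cdot,A^2\cdot) = \tfrac13 \grad \trace A^3$ by the second of the two displayed identities immediately following Lemma~\ref{main lemma1}, and the second summand becomes $A(\Div A^2)$ by the commutation remark above. Substituting the expression for $\Div A^2$ just obtained and distributing $A$, together with $A\circ A = A^2$, yields $\tfrac13 \grad \trace A^3 + \tfrac12 A(\grad |A|^2) + mA^2(\grad f)$, which is \eqref{divA3}.

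I do not expect any genuine obstacle: the lemma is a bookkeeping corollary of Lemma~\ref{main lemma1}, and the only point that deserves a word of justification is the interchange of $A$ with the trace, which is immediate from tensoriality. One could alternatively prove \eqref{divA3} by iterating \eqref{nabla An} twice all the way down to $A$, but routing it through the already-established \eqref{divA2} keeps the computation shortest.
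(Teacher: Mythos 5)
Your proposal is correct and follows essentially the same route as the paper: both trace the identity $(\nabla A^n)(X,Y)=(\nabla A)(X,A^{n-1}Y)+A\big((\nabla A^{n-1})(X,Y)\big)$ for $n=2$ and $n=3$, identify the first term via Lemma \ref{main lemma1}\eqref{grad trAn}, and reduce the second to $A(\Div A)$ resp.\ $A(\Div A^2)$. Your explicit justification of pulling $A$ through the trace is a point the paper leaves implicit, but the argument is the same.
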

	\begin{proof}
		We have
		$$\Div A^2=\trace(\nabla A^2)(\cdot,\cdot)\ \mbox{  and  }\Div A^3=\trace(\nabla A^3)(\cdot,\cdot).$$
		Putting $X=Y=E_i$ and $n=2$ in the equation \eqref{nabla An} in Lemma \ref{main lemma1}, we have
		\begin{eqnarray*}
\Div A^2&=&\trace(\nabla A^2)(\cdot,\cdot)\\
		&=& \trace(\nabla A)(\cdot,A\cdot)+A(\trace(\nabla A)(\cdot,\cdot))\\
		&=&\frac 12 \grad\vert A\vert^2+A(\Div A)\\
		&=&\frac 12 \grad\vert A\vert^2+mA(\grad f).
		\end{eqnarray*}
Similarly, putting $X=Y=E_i$ and $n=3$ in the equation \eqref{nabla An} in Lemma \ref{main lemma1}, we get
\begin{eqnarray*}
\Div A^3&=&\trace(\nabla A^3)(\cdot,\cdot)\\
		&=&\trace(\nabla A)(\cdot,A^2\cdot)+A(\trace(\nabla A^2)(\cdot,\cdot))\\
		&=&\frac 13 \grad\trace A^3+A(\Div A^2)\\
		&=&\frac 13 \grad\trace A^3+\frac 12 A(\grad\vert A\vert^2)+mA^2(\grad f).
\end{eqnarray*}
	
	\end{proof}
	\begin{lemma}\label{integral equlities}
Let $\varphi:M\to N^{m+1}(c)$ be a compact biconservative hypersurface in space form $ N^{m+1}(c)$. Then we have the following integral identities,
\begin{eqnarray}
\label{intf2A2}\frac 12 \int_M f^2\Delta \vert A\vert^2\ v_g&=&\int_M \frac{m^2f^2}{2}\vert\grad f\vert^2-2f\langle A^2,\Hess f \rangle\ v_g\\
\label{intfA3}-\frac 13 \int_M f\Delta\trace A^3\ v_g&=&\int_M \frac{m^3f^2}{8}\vert\grad f\vert^2+\frac m2 f\langle A^2,\Hess f\rangle+\langle \Hess f,A^3 \rangle\ v_g.
\end{eqnarray}
	\end{lemma}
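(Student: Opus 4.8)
The plan is to reduce both identities to the divergence formulae of Lemma~\ref{divA2A3}, the biconservativity relation \eqref{eq:BiconservativityCharacterization}, and repeated integration by parts over the closed manifold $M$. Throughout I work with the sign conventions fixed in the Preliminaries, so that $\Delta h=-\Div\grad h$ for $h\in C^2(M)$ and hence $\int_M h\,\Delta g\ v_g=\int_M\langle\grad h,\grad g\rangle\ v_g$ for all $g,h\in C^2(M)$. The first thing I would record is that, by iterating \eqref{eq:BiconservativityCharacterization}, $A^k(\grad f)=\bigl(-\tfrac m2\bigr)^k f^k\,\grad f$ for every $k\ge 1$; in particular $A^2(\grad f)=\tfrac{m^2}{4}f^2\,\grad f$ and $\langle A^2(\grad f),\grad f\rangle=\tfrac{m^2}{4}f^2|\grad f|^2$, while, using the symmetry of $A$ to move it onto $\grad f$, $\langle\grad f,A(\grad|A|^2)\rangle=-\tfrac m2 f\,\langle\grad f,\grad|A|^2\rangle$.

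For \eqref{intf2A2} I would start from $\tfrac12\int_M f^2\Delta|A|^2\ v_g=\int_M f\langle\grad f,\grad|A|^2\rangle\ v_g$, which is the integration-by-parts identity above applied with $h=f^2$. By \eqref{divA2} together with $A(\grad f)=-\tfrac m2 f\,\grad f$ one has $\grad|A|^2=2\Div A^2+m^2 f\,\grad f$, so that
\[
\tfrac12\int_M f^2\Delta|A|^2\ v_g=2\int_M f\langle\grad f,\Div A^2\rangle\ v_g+m^2\int_M f^2|\grad f|^2\ v_g .
\]
For the remaining term I would write $\langle\grad f,\Div A^2\rangle=\Div\!\bigl(A^2(\grad f)\bigr)-\langle A^2,\Hess f\rangle$ and integrate by parts: since $M$ is closed, $\int_M 2f\,\Div\!\bigl(A^2(\grad f)\bigr)\ v_g=-2\int_M\langle\grad f,A^2(\grad f)\rangle\ v_g=-\tfrac{m^2}{2}\int_M f^2|\grad f|^2\ v_g$. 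Substituting this back and combining the two multiples of $\int_M f^2|\grad f|^2\ v_g$ gives exactly \eqref{intf2A2}.

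The proof of \eqref{intfA3} follows the same pattern. The integration-by-parts identity with $h=f$ gives $-\tfrac13\int_M f\Delta\trace A^3\ v_g=-\tfrac13\int_M\langle\grad f,\grad\trace A^3\rangle\ v_g$, and \eqref{divA3} lets me replace $\grad\trace A^3$ by $3\Div A^3-\tfrac32 A(\grad|A|^2)-3mA^2(\grad f)$. Rewriting $\langle\grad f,A(\grad|A|^2)\rangle$ and $\langle\grad f,A^2(\grad f)\rangle$ through the reductions listed above leaves me with two auxiliary integrals: $\int_M\langle\grad f,\Div A^3\rangle\ v_g=-\int_M\langle A^3,\Hess f\rangle\ v_g$, obtained once more by integrating by parts and using the symmetry of $A^3$, and $\int_M f\langle\grad f,\grad|A|^2\rangle\ v_g=\tfrac12\int_M f^2\Delta|A|^2\ v_g$, which has just been evaluated in \eqref{intf2A2}. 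Inserting these and collecting the numerical coefficients yields \eqref{intfA3}.

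All the individual steps are routine; there is no analytic obstacle. The points that require care are the consistent bookkeeping of the sign conventions for $\Delta$ and $\Hess$, the systematic elimination of every occurrence of $A^k(\grad f)$ by means of \eqref{eq:BiconservativityCharacterization} — this is precisely where the biconservativity hypothesis is used in an essential way — and checking that each term one wishes to discard is a genuine divergence, so that it integrates to zero over the closed manifold $M$. The only mildly delicate passage is the reduction of \eqref{intfA3}, since it feeds both on the already-established \eqref{intf2A2} and on the term $A^2(\grad f)$ present in \eqref{divA3}.
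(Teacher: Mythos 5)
Your argument is correct; every step checks out (including the sign conventions, the identity $\mathrm{div}(T(\grad f))=\langle \Div T,\grad f\rangle+\langle T,\Hess f\rangle$ for symmetric $T$, and the final bookkeeping of coefficients in both identities). It is, however, organized differently from the paper's proof, and the difference is worth noting. The paper computes $\tfrac12\langle\grad f^2,\grad|A|^2\rangle$ and $\tfrac13\langle\grad f,\grad\trace A^3\rangle$ \emph{pointwise} in a principal frame, using the total symmetry of $\langle(\nabla A)(\cdot,\cdot),\cdot\rangle$ and the relation $A(\grad f)=-\tfrac m2 f\grad f$; this produces an extra term $f^2\langle A,\Hess mf\rangle$ in the first identity, which is then discarded only after integration by invoking the self-adjointness of the Cheng--Yau operator associated with the divergence-free tensor $f^2A$ (Lemma \ref{f2A}). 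You instead invert the divergence formulae of Lemma \ref{divA2A3} to express $\grad|A|^2$ and $\grad\trace A^3$ through $\Div A^2$, $\Div A^3$, $A(\grad|A|^2)$ and $A^k(\grad f)$, and then integrate by parts; in your bookkeeping the $\langle A,\Hess f\rangle$ term never appears, so you need only the pointwise biconservativity relation \eqref{eq:BiconservativityCharacterization} and the divergence theorem, not the $\Div(f^2A)=0$ characterization. The price is that your derivation of \eqref{intfA3} feeds on the already-proved \eqref{intf2A2} (through $\int_M f\langle\grad f,\grad|A|^2\rangle\,v_g=\tfrac12\int_M f^2\Delta|A|^2\,v_g$), whereas the paper obtains \eqref{intfA3} by an independent pointwise computation; this dependence is harmless but should be made explicit in the order of proof. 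Both arguments ultimately rest on Lemma \ref{main lemma1}, so the two proofs are close relatives, but yours is slightly more economical in its hypotheses at the integration stage.
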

	\begin{proof}
	Let $\{E_i\}_{i \in \overline {1, m}}$ be a local orthonormal frame field  on $M$  which diagonalizes the shape operator.	To prove first one, we would like to calculate the term $\langle \grad f^2,\grad\vert A\vert^2 \rangle$. In this manner, using \eqref{gradnorm A} we get
		\begin{align*}
			\frac 12\langle \grad f^2,\grad\vert A\vert^2 \rangle&=\sum_{i=1}^m\langle \grad f^2,(\nabla A)(E_i,AE_i) \rangle\\
			&=\sum_{i=1}^m \ang{(\nabla A)(E_i,\grad f^2)}{AE_i}\\
			&=\sum_{i=1}^m\ang{\nabla _{E_i}(A(\grad f^2))-A(\nabla_{E_i}\grad f^2)}{AE_i}\\
			&=\sum_{i=1}^m \ang{\nabla_{E_i}(2f(\frac{-mf}{2})\grad f)}{AE_i}-\ang{\nabla_{E_i}\grad f^2}{A^2E_i}\\
			&= -m \sum_{i=1}^m \biggl\{
			\left\langle E_i(f^2) \, \grad f + f^2 \nabla_{E_i} \grad f, \, A E_i \right\rangle 
			\biggr\} \\
			&\quad - 2 \sum_{i=1}^m \biggl\{
			\left\langle E_i(f) \, \grad f + f \nabla_{E_i} \grad f, \, A E_i \right\rangle
			\biggr\}\\
			&=-m\ang{A(\grad f^2)}{\grad f}-f^2\ang{A}{\Hess mf}-2\biggl\{\vert A(\grad f)\vert^2+f\ang{A^2}{\Hess f}\biggr\}.
		\end{align*}
		From which we get
		\begin{equation}\label{first int equality proof}
\frac 12\langle \grad f^2,\grad\vert A\vert^2 \rangle=\frac{m^2f^2}{2}\vert\grad f\vert^2-f^2\ang{A}{\Hess mf}-2f\ang{A^2}{\Hess f}.
		\end{equation}
	Note that $\Div f^2A=0$ since $M$ is biconservative. Then,  Applying Cheng-Yau square operator $\square$ by taking into account that $M$ is compact we obtain $\int_M f^2\ang{A}{\Hess mf}=0.$ Moreover, using integration by part one can say $\int_M \langle \grad f^2,\grad\vert A\vert^2 \rangle=\int_M f^2\Delta\vert A\vert^2 $ since compactness of $M$. So, integrating \eqref{first int equality proof} over $M$ we obtain \eqref{intf2A2}.
	
	Now, for the second, we shall compute the term $\ang{\grad f}{\grad\trace A^3}$. We have
	\begin{eqnarray*}
\frac 13 \ang{\grad f}{\grad\trace A^3}&=&\sum_{i=1}^m \ang{\grad f}{(\nabla A)(E_i,A^2E_i)}\\
									   &=&\sum_{i=1}^m \ang{(\nabla A)(E_i,\grad f^2)}{A^2E_i}\\
									   &=&\sum_{i=1}^m \ang{\nabla_{E_i}(A(\grad f))-A(\nabla_{E_i}\grad f)}{A^2E_i}\\
									   &=&\sum_{i=1}^m \ang{\nabla_{E_i}(-\frac{mf}{2}\grad f)}{A^2E_i}-\ang{\nabla_{E_i}\grad f}{A^3E_i}\\
									   &=&\sum_{i=1}^m -\frac m2\biggl\{\vert A(\grad f)\vert^2+f\ang{A^2}{\Hess f}\biggr\}-\ang{A^3}{\Hess f}
	\end{eqnarray*}
	From which we get
	\begin{equation}\label{second int equality in proof}
		\frac 13 \ang{\grad f}{\grad\trace A^3}=-\frac{m^3f^2}{8}\vert\grad f\vert^2-\frac m2 f\langle A^2,\Hess f\rangle-\langle \Hess f,A^3 \rangle.
	\end{equation}
	İntegrating \eqref{second int equality in proof}  we obtain \eqref{intfA3}.
	\end{proof}
	Before proceed, notice that combining the  integral identities given in \eqref{int equ from firt paper} and \eqref{intf2A2}, we derive the following identity, which will be useful in later computations.
	\begin{equation}\label{fA2hessf}
\int_M 2f\langle A^2,\Hess f\rangle=\int_M \frac{m^2f^2}{2}|\grad f|^2+f^2\biggl\{|\nabla A|^2+\frac 12\sum_{i,j=1}^m(\lambda_i-\lambda_j)^2R_{ijij}\biggr\}\ v_g.
	\end{equation}
	
	In the next lemma, we introduce a divergence-free (1,1)-tensor  that plays a key role in this work.
\begin{lemma}\label{div-free lemma}
		Let \(\varphi : M^m \to N^{m+1}(c)\) be a hypersurface in \(N^{m+1}\).  
		Let \(\phi\) be a \((1,1)\) tensor given by
		\[
		\phi = \psi_3 \, \id - \psi_2 A + mf A^2 - A^3,
		\]
		where
		\[
		\psi_3 = \frac{1}{6} m^3 f^3 + \frac{1}{3} \trace A^3 - \frac{1}{2} mf \lvert A \rvert^2, \quad
		\psi_2 = \frac{1}{2} \left( m^2 f^2 - \lvert A \rvert^2 \right).
		\]
		Then \(\mathrm{div} \, \phi = 0\).
	\end{lemma}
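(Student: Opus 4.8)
The plan is to reduce the computation of $\Div\phi$ to the divergence and gradient identities already established. Using linearity of the divergence together with the Leibniz rule $\Div(g\,T)=g\,\Div T+T(\grad g)$, valid for a smooth function $g$ and a symmetric $(1,1)$-tensor $T$, and noting that $\Div(\psi_3\,\id)=\grad\psi_3$, one expands
\[
\Div\phi=\grad\psi_3-\psi_2\,\Div A-A(\grad\psi_2)+mf\,\Div A^2+mA^2(\grad f)-\Div A^3 .
\]
Here the extra term $mA^2(\grad f)$ arises from applying the Leibniz rule to $mf\,A^2$, since $\grad(mf)=m\grad f$.

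I would then substitute the known expressions. From Lemma~\ref{symmetric of A} we have $\Div A=m\grad f$, and Lemma~\ref{divA2A3} gives $\Div A^2=\tfrac12\grad|A|^2+mA(\grad f)$ and $\Div A^3=\tfrac13\grad\trace A^3+\tfrac12 A(\grad|A|^2)+mA^2(\grad f)$. Differentiating the definitions of the coefficient functions yields $\grad\psi_2=m^2f\,\grad f-\tfrac12\grad|A|^2$ and $\grad\psi_3=\tfrac12 m^3f^2\,\grad f+\tfrac13\grad\trace A^3-\tfrac12 m|A|^2\,\grad f-\tfrac12 mf\,\grad|A|^2$. After inserting all of this, the resulting expression for $\Div\phi$ is a linear combination of the six fields $\grad f$, $A(\grad f)$, $A^2(\grad f)$, $\grad|A|^2$, $A(\grad|A|^2)$ and $\grad\trace A^3$.

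The last step is simply to collect the coefficients of these six fields and to check that each one vanishes. For instance, the coefficient of $\grad f$ is $\tfrac12 m^3f^2-\tfrac12 m|A|^2$ (from $\grad\psi_3$) together with $-m\psi_2=-\tfrac12 m^3f^2+\tfrac12 m|A|^2$ (from $-\psi_2\Div A$); the coefficient of $\grad\trace A^3$ is $\tfrac13-\tfrac13$; the coefficient of $\grad|A|^2$ is $-\tfrac12 mf+\tfrac12 mf$; the coefficient of $A(\grad f)$ is $-m^2f+m^2f$; the coefficient of $A(\grad|A|^2)$ is $\tfrac12-\tfrac12$; and the coefficient of $A^2(\grad f)$ is $m-m$. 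Each is zero, so $\Div\phi=0$. The only genuine difficulty is bookkeeping, i.e.\ keeping track of signs and numerical constants, rather than anything conceptual. As a sanity check, one may observe that, by Newton's identities, $\psi_2$ and $\psi_3$ are exactly the second and third elementary symmetric functions of the principal curvatures, so $\phi$ is the third Newton transformation of the shape operator; the divergence-freeness of Newton transformations of hypersurfaces in space forms is classical, and the argument above just re-derives it from the identities collected in Lemma~\ref{main lemma1} and Lemma~\ref{divA2A3}.
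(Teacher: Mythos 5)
Your proof is correct and follows essentially the same route as the paper: expand $\Div\phi$ by the Leibniz rule, substitute the identities for $\Div A$, $\Div A^2$, $\Div A^3$ from Lemma \ref{divA2A3} together with the gradients of $\psi_2$ and $\psi_3$, and check that the coefficients of the six resulting vector fields cancel (I verified each of your coefficients and they are right). Your closing observation that $\phi$ is the third Newton transformation of $A$, so that its divergence-freeness is classical, is a nice conceptual check that the paper does not make explicit.
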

	\begin{proof}
	To present the proof, we shall proceed with a direct computation.
		\begin{equation}\label{divphifirst}
\Div\phi=\grad \psi_3-A(\grad \psi_2)-\psi_2m\grad f+m\left(A^2(\grad f)+f\Div A^2\right)-\Div A^3\\
		\end{equation}
		A term-by-term analysis leads to the following:
		\begin{subequations}\label{aburcubur}
		\begin{align}
			\grad \psi_3&=\frac{m^2}{6} 3f^2\grad f+\frac 13\grad\trace A^3-\frac m2\biggl(\vert A\vert^2\grad f+f\grad\vert A\vert^2\biggr)\\
			A(\grad \psi_2)&=\frac 12 \biggl(m^22fA(\grad f)-A(\grad\vert A\vert^2)\biggr)
		\end{align}
	\end{subequations}
	Substituting \eqref{aburcubur} into \eqref{divphifirst} and taking into account Lemma \ref{main lemma1}, we get
\begin{align*}
	\Div\phi&=\frac{m^3}{6}3f^2\grad f+\frac 13\grad\trace A^3-\frac m2\biggl(\vert A\vert^2\grad f+f\grad\vert A\vert^2\biggr)\\
			&\quad-m^2fA(\grad f)-\frac 12A(\grad\vert A\vert^2)-\frac 12(m^2f^2-\vert A\vert^2)m\grad f\\
			&\quad +mA^2(\grad f)+mf\biggl(\frac 12\grad\vert A\vert^2+A(m\grad f)\biggr)\\
			&\quad-\biggl(\frac 13\grad\trace A^3+\frac 12A(\grad\vert A\vert^2)+A^2(m\grad f)\biggr).
\end{align*} 
After simplifyig we get $\Div \phi=0$.
	\end{proof}
	Before using the Cheng–Yau operator $\square$ associated to  $\phi$, the following lemma must be established.
	\begin{lemma}\label{14lemma}
Let $\varphi:M^m\to N^{m+1}$ be a hypersurface in Riemannian manifold $N$. Then 
\begin{equation}\label{1/4grad}
\frac 14\vert\grad\vert A\vert^2\vert^2\leq\vert A\vert^2\vert\nabla A\vert^2 
\end{equation}
	\end{lemma}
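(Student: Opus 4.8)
The plan is to work in a local orthonormal frame $\{E_i\}$ that diagonalizes the shape operator, $A(E_i)=\lambda_i E_i$, on a connected component of the dense open set $M_A$, and then establish the pointwise inequality there; by continuity it will extend to all of $M$. In this frame one has $|A|^2=\sum_i\lambda_i^2$, so $\grad|A|^2=\sum_i 2\lambda_i\,\grad\lambda_i$ expressed through the frame gives $E_j(|A|^2)=2\sum_i\lambda_i\,E_j(\lambda_i)$. The first step is to relate $E_j(|A|^2)$ to the components of $\nabla A$: using Lemma~\ref{symmetric of A} (total symmetry of $\langle(\nabla A)(\cdot,\cdot),\cdot\rangle$) together with $\langle(\nabla A)(E_j,E_i),E_i\rangle=E_j(\lambda_i)$ (the standard computation $\nabla_{E_j}(AE_i)-A(\nabla_{E_j}E_i)$ paired with $E_i$), I get
\[
\tfrac12 E_j(|A|^2)=\sum_{i=1}^m \lambda_i\,\langle(\nabla A)(E_j,E_i),E_i\rangle=\sum_{i=1}^m \lambda_i\,(\nabla A)_{jii},
\]
writing $(\nabla A)_{abc}:=\langle(\nabla A)(E_a,E_b),E_c\rangle$ for brevity.

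The second step is a Cauchy–Schwarz estimate on this identity, fixing $j$. Viewing $\tfrac12 E_j(|A|^2)$ as the inner product of the vectors $(\lambda_i)_{i}$ and $((\nabla A)_{jii})_{i}$ in $\mathbb{R}^m$, I obtain
\[
\tfrac14\bigl(E_j(|A|^2)\bigr)^2\le\Bigl(\sum_{i=1}^m\lambda_i^2\Bigr)\Bigl(\sum_{i=1}^m (\nabla A)_{jii}^2\Bigr)=|A|^2\sum_{i=1}^m (\nabla A)_{jii}^2.
\]
Summing over $j$ gives $\tfrac14|\grad|A|^2|^2\le |A|^2\sum_{i,j}(\nabla A)_{jii}^2$. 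The final step is to note that $\sum_{i,j}(\nabla A)_{jii}^2$ is only a partial sum of $|\nabla A|^2=\sum_{a,b,c}(\nabla A)_{abc}^2$ (indexed by the triples with the last two indices equal), hence
\[
\sum_{i,j=1}^m (\nabla A)_{jii}^2\le |\nabla A|^2,
\]
which yields $\tfrac14|\grad|A|^2|^2\le|A|^2|\nabla A|^2$ on $M_A$, and thus on all of $M$ by density and continuity.

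The only subtle point — the step I expect to require the most care in the writeup — is the passage from $M_A$ to all of $M$: on $M_A$ the principal curvatures and the frame $\{E_i\}$ are smooth, but at boundary points of the components of $M_A$ the $\lambda_i$ may only be continuous. However, both sides of \eqref{1/4grad} are globally defined continuous functions on $M$ (indeed $|A|^2$, $|\nabla A|^2$ and $|\grad|A|^2|^2$ make sense everywhere regardless of diagonalizability), the inequality holds on the dense set $M_A$, and therefore it holds everywhere by continuity. No curvature hypotheses, compactness, or biconservativity are needed — this is purely a linear-algebraic consequence of the Codazzi symmetry of $\nabla A$, which is why the lemma is stated for hypersurfaces in an arbitrary Riemannian manifold $N$.
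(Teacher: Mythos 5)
Your proof is correct and follows essentially the same route as the paper: diagonalize $A$ in a local orthonormal frame, use the identity $\tfrac12\grad|A|^2=\trace(\nabla A)(\cdot,A\cdot)$, apply Cauchy--Schwarz, and bound the resulting partial sum of components by the full $|\nabla A|^2$. If anything, your writeup is the more careful one: by applying Cauchy--Schwarz to the whole sum over $i$ for each fixed $j$ you correctly handle the cross-terms that the paper's displayed computation silently drops, and your remark on passing from $M_A$ to $M$ by continuity is a legitimate point the paper glosses over.
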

	\begin{proof}
Let $\{E_i\}_{i \in \overline {1, m}}$ be a local orthonormal frame field  on $M$  which diagonalizes the shape operator. We know from \eqref{gradnorm A} that
$$\frac 12\grad\vert A\vert^2=\sum_{i=1}^m(\nabla A)(E_i,AE_i)=\sum_{i,j=1}^m\ang{(\nabla A)(E_i,AE_i)}{E_j}E_j.$$
From which we get
\begin{align*}
\frac 14\vert\grad\vert A\vert^2\vert^2&=\biggl\vert\sum_{i,j=1}^m\langle (\nabla A)(E_i,AE_i),E_j\rangle E_j\biggr\vert^2\\
										&=\sum_{i,j=1}^m\langle (\nabla A)(E_i,E_j),AE_i \rangle^2\\
										&=\sum_{i,j,k=1}^m |(\nabla A)(E_i,E_j)|^2|A(E_i)|^2\\
										&\leq\sum_{i,j,k=1}^m |(\nabla A)(E_i,E_j)|^2|A(E_k)|^2 \\
										&= \vert\nabla A\vert^2\vert A\vert^2.
\end{align*}
	\end{proof}
	Now, since we have div-free $(1,1)$ tensor $\phi$ and $M$ is compact, applying the Cheng-Yau operator $\square$  associated to $\phi$, we have from \eqref{eq:consequenceChengYauSelfAdjoint} that
	$$\int_M\square f \ v_g=\int_M\langle \phi,\Hess f\rangle\ v_g=0$$
	that is
	\begin{equation}\label{main equ for CY}
		\int_M \langle \psi_3 \, \id - \psi_2 A + mf A^2 - A^3,\Hess f\rangle=0.
	\end{equation}
	To make the computations easier to follow, we proceed step by step. 
	 We will evaluate each term in the integrand separately.
	
	We start with
	\begin{align*}
		\int_M\ang{\psi_3\id}{\Hess f}&=-\int_M \psi_3\Delta f\\
									&=-\int_M\frac 16 m^3f^3\Delta f+\frac 13\trace A^3\Delta f-\frac 12mf\vert A\vert^2\Delta f\\
									&=-\int_M\frac 16 m^33f^2\vert\grad f\vert^2+\frac 13 f\Delta\trace A^3\\
									&\quad-\frac m2\langle |A|^2\grad f+f\grad\vert A\vert^2,\grad f \rangle.
	\end{align*}
	From which it follows that	
	\begin{equation}\label{firs term}
			\int_M\ang{\psi_3\id}{\Hess f}=-\int_M(\frac{m^3}{2}f^2-\frac m2 |A|^2)|\grad f|^2+\frac 13 f\Delta\trace A^3-\frac m4 f^2\Delta|A|^2.
	\end{equation}
	Now, followed by the term
	\begin{align*}
-\int_M \psi_2\langle A,\Hess f\rangle&=-\int_M \frac{1}{2m}(m^2f^2-|A|^2)\langle A,\Hess mf\rangle\\
									&=\int_M \frac{1}{2m}(m^2f^2-|A|^2)\biggl(\frac 12\Delta|A|^2+|\nabla A|+\frac 12\sum_i(\lambda_i-\lambda_j)^2R_{ijij}\biggr)
	\end{align*}
	From which we get
	\begin{equation}\label{second term}
-\int_M \psi_2\langle A,\Hess f\rangle=\int_M \frac m4 f^2\Delta|A|^2-\frac{1}{4m}|\grad|A|^2|^2+\frac{1}{2m}(m^2f^2-|A|^2)\biggr(|\nabla A|^2+\frac 12\sum_{i,j=1}^m(\lambda_i-\lambda_j)^2R_{ijij}\biggl)
	\end{equation}
	Substituting \eqref{firs term} and \eqref{second term} into \eqref{main equ for CY}, we obtain
	\begin{align}\label{huge int equality to solve}
		0&=\int_M-\frac m2(m^2f^2-|A|^2)|\grad f|^2-\frac 13 f\Delta\trace A^3+\frac m2 f^2\Delta|A|^2-\frac{1}{4m}|\grad|A|^2|^2\\
	\nonumber	&\quad+\frac{1}{2m}(m^2f^2-|A|^2)\biggl(|\nabla A|^2+\frac 12\sum_{i,j=1}^m(\lambda_i-\lambda_j)^2R_{ijij}\biggr)+mf\ang{A^2}{\Hess f}-\ang{A^3}{\Hess f}.
	\end{align}
	Referring to the integral identities established in Lemma \ref{integral equlities}, we are able to simplify a portion of the integral presented in \eqref{huge int equality to solve}. The computation proceeds as follows:
	\begin{align}\label{simplifying 1}
		\int_M-\frac 13 f\Delta\trace A^3+\frac m2 f^2\Delta|A|^2+mf\ang{A^2}{\Hess f}-\ang{A^3}{\Hess f}&=\int_M -\frac m2 f\langle A^2,\Hess f\rangle\\
	\nonumber																									&\quad +\frac{5m^3f^2}{8}|\grad f|^2.
	\end{align}
	We have from \eqref{fA2hessf} that
	\begin{equation}\label{simplifying 2}
\int_M -\frac m2 f\langle A^2,\Hess f\rangle+\frac{5m^3f^2}{8}|\grad f|^2=\int_M \frac{m^3f^2}{2}|\grad f|^2-\frac m4f^2\biggl(|\nabla A|^2+\frac 12\sum_{i,j=1}^m(\lambda_i-\lambda_j)^2R_{ijij}\biggr)
	\end{equation}
	On the other hand, we have from \eqref{1/4grad} that
	\begin{equation}\label{simplifying 3}
		-\frac{1}{4m}|\grad|A|^2|^2\geq -\frac 1m |\nabla A|^2|A|^2.
	\end{equation}
		Now, substituting \eqref{simplifying 1}, \eqref{simplifying 2} and \eqref{simplifying 3} into \eqref{huge int equality to solve}, we obtain
	\begin{equation}\label{huge int 2}
		0\geq \int_M \frac m2|A|^2|\grad f|^2+\big(\frac{mf^2}{4}-\frac{3}{2m}|A|^2\big)|\nabla A|^2+\biggl(\frac{mf^2}{4}-\frac{1}{2m}|A|^2\biggr)\frac 12\sum_{i,j=1}^m(\lambda_i-\lambda_j)^2R_{ijij}
	\end{equation}
	After establishing the main integral inequality above, we can give the proof of the main theorem of this work.

	\subsection{The proof of Theorem \ref{Theorem 1}}:

		\vspace*{1\baselineskip}
	Let $|A|^2\leq \frac{m^2f^2}{6}$. Then \eqref{huge int 2} becomes
		\begin{equation*}
0\geq\int_M \frac m2|A|^2|\grad f|^2+\frac 12 \frac{mf^2}{6}\sum_{i,j=1}^m(\lambda_i-\lambda_j)^2R_{ijij},
		\end{equation*}
		which leads to 
		$$f^2\sum_{i,j=1}^m(\lambda_i-\lambda_j)^2R_{ijij}=0$$
		since sectional curvature is non negative. Using again the same fact that,  we obtain
		\begin{equation}\label{f=0}
		f^2(\lambda_i-\lambda_j)^2R_{ijij}=0.
		\end{equation}
	
	 Our claim is to show that $\grad f=0.$ If $f=0$ then it obviously implies that the claim is true. Now, because $M$ is non minimal then there exists at least one point $p$ on $M$ such that $f(p)\neq 0.$ Then, it follows from \eqref{f=0} that  
	 \begin{equation*}
	 	(\lambda_i - \lambda_j)^2 R_{ijij} = 0, \quad \forall i, j \in \overline {1, m}.
	 \end{equation*}
	 at any point of an open neighbourhood $U$ of $p$. Since, from the Gauss equation, for any distinct $i, j \in \overline {1, m}$, we have $R_{ijij} = c + \lambda_i \lambda_j$, we deduce that
	 \begin{equation*}
	 	(\lambda_i - \lambda_j)^2 (c + \lambda_i \lambda_j) = 0, \quad \forall i, j \in \overline {1, m}.
	 \end{equation*}
	 In fact, on $M$, we obtain
	 \begin{equation} \label{eq:RelationPrincipalCurvatures}
	 	(\lambda_i - \lambda_j) (c + \lambda_i \lambda_j) = 0, \quad \forall i, j \in \overline {1, m}.
	 \end{equation}
	 The last relation implies that $M$ has at most two distinct principal curvatures at any point of $M$.
	 
	 Consider now the subset $M_A$ of all point in $M$ at which the number of distinct principal curvatures is locally constant. In the following we will show that $\grad f = 0$ on every connected component of $M_A$ and thus, from density, we will conclude that $\grad f = 0$ on $M$, i.e. $f$ is constant. Then, from \eqref{eq:GeneralFormulaSubmanifolds} and the fact that $M$ is compact, we get that $\nabla A = 0$.
	 
	 We choose an arbitrary connected component of $M_A$. Since $M$ has at most two distinct principal curvatures, on this component we have: either each of its points is umbilical, or each of its points has exactly two distinct principal curvatures. For simplicity, we denote by $M$ the chosen connected component.
	 
	 If $M$ is umbilical, then it is CMC.
	 
	 We suppose now that $M$ has exactly two distinct principal curvatures at any point. In this case, $A$ is locally diagonalizable with respect to a smooth orthonormal frame field $\{E_i\}_{i \in \overline {1, m}}$. Denote
	 \begin{equation*}
	 	\lambda_1 = \ldots = \lambda_{m_1} \quad \text{and} \quad \lambda_{m_1 + 1} = \ldots = \lambda_m.
	 \end{equation*}
	 
	 Assume that $\grad f \neq 0$ and we will obtain a contradiction. If necessary, we can restrict ourselves to an open subset of $M$, denoted again by $M$, such that $\grad f \neq 0$ at any point of $M$.
	 
	 Now, using \eqref{eq:BiconservativityCharacterization} we can assume that
	 \begin{equation*}
	 	\lambda_1 = - \frac m 2 f, \quad m_1 = 1 \quad \text{and} \quad E_1 = \frac {\grad f} {|\grad f|}
	 \end{equation*}
	 on $M$. Since $\trace A = mf$, we have
	 \begin{equation*}
	 	\lambda_2 = \frac {3m} {2(m - 1)} f.
	 \end{equation*}
	 Using \eqref{eq:RelationPrincipalCurvatures}, we obtain
	 \begin{equation*}
	 	0 = c + \lambda_1 \lambda_2 = c - \frac {3m^2} {4(m - 1)} f^2.
	 \end{equation*}
	 We wouldlike you to notice that this relation fails if $c \leq 0$. If $c > 0$, because of $f$ is smooth, we obtain that $f$ is constant on $M$, which contradicts $\grad f \neq 0$ at any point of $M$. 
	 
	 Now, from \eqref{eq:GeneralFormulaSubmanifolds} and the compactness of $M$  we obtain $\nabla A=0$. Then $M$ has at most two distinct principal curvatures which implies that either $M$ is totally umbilical or has exactly two distinct principal curvatures. In order to classify such a hypersurface we need to consider the cases $c=-1,\ c=0,\ c=1$.
	 
	 In the case of $c=-1$, $M$ must be totally umbilical due to its compactness, additionally, $\varphi(M)$ is a hypersphere $\mathbb{S}^m(r)$ of radius $r$.
	 
	 When $c=0$, again, $M$ must be totally umbilical due to its compactness and $\varphi(M)$ is a hypersphere $\mathbb{S}^m(r)$ of radius $r$.
	 
	 Now, we consider $c=1$. If $M$ is totally umbilical then $\varphi(M)$ is  a small hypersphere   $\mathbb{S}^m(r)$ of  $\mathbb{S}^{m+1}(r)$. If $M$ has two distinct principal curvature then  $\varphi(M)$ is the standart product $\mathbb{S}^1(r_1)\times\mathbb{S}^{m-1}(r_2)$, where $r_1^2+r_2^2=1$. For any $X=(X_1,X_2)\in C(T(\mathbb{S}^1(r_1)\times\mathbb{S}^{m-1}(r_2)))$, we have
	 $$AX=\biggr(-\frac{r_2}{r_1}X_1,\frac{r_1}{r_2}X_2\biggl)$$
	 and we obtain
	 $$\lambda_1=-\frac{r_2}{r_1}\mbox{  and  }\lambda_2=\frac{r_1}{r_2}.$$
	 From which we say
	 $$|A|^2=\biggl(\frac{r_2}{r_1}\biggr)^2+(m-1)\biggl(\frac{r_1}{r_2}\biggr)^2$$
	 and
	 $$m^2f^2=\biggl(\frac{r_2}{r_1}\biggr)^2-2(m-1)+(m-1)^2\biggl(\frac{r_1}{r_2}\biggr)^2.$$
	  we have from the hypothesis that
	 $$ \biggl(\frac{r_2}{r_1}\biggr)^2+(m-1)\biggl(\frac{r_1}{r_2}\biggr)^2\leq \frac 16 \biggl\{\biggl(\frac{r_2}{r_1}\biggr)^2-2(m-1)+(m-1)^2\biggl(\frac{r_1}{r_2}\biggr)^2\biggr\}.$$
	 It follows that
	 $$5\biggr(\frac{r_2}{r_1}\biggr)^2-(m-7)(m-1)\biggl(\frac{r_1}{r_2}\biggr)^2\leq 0$$
	 which leads to
	 $$5\biggr(\frac{r_2}{r_1}\biggr)^4\leq (m-7)(m-1)< (m-1)^2< 5(m-1)^2.$$
	 Then
	 \begin{equation}\label{eq:r1}
	 	\biggl(\frac{1}{r_1^2}-1\biggr)^2< (m-1)^2
	 \end{equation}
	 Notice that $\frac{1}{r_1^2} > 1$ since $r_1^2+r_2^2=1$. Taking into account this fact, solving the inequality \eqref{eq:r1} for $r_1$ one can get $r_1>\sqrt{1/m}.$
	  \hfill\qed
	  
		In Theorem \ref{Theorem 1}, we succeeded in relaxing the strong assumption of constant scalar curvature. The natural question that follows is whether the condition of non-negative sectional curvature can also be relaxed. Li answered this question affirmatively by employing Okumura's lemma, which is an algebraic result \cite{haizhong1996hypersurfaces}. Later, Andronic and the author further developed this result, leading to the conclusion stated in the following lemma (for detail see the proof of Theorem 3.6 in \cite{andronic2025}).
		\begin{lemma}\label{lemma: li adapted}\cite{andronic2025}
			Let $\varphi:M^m\to N^{m+1}(c)$ be a hypersurface in a space form. If $c+f^2\geq 0$ when $c<0$ and
			\begin{align}\label{eq: c case}
				|A|^2 \leq mc + \frac {m^3} {2(m-1)} f^2 - \frac {m(m-2)} {2(m-1)} \sqrt { m^2 f^4 + 4 (m-1) cf^2},						 
			\end{align}
			then
			$$\sum_{i,j=1}^m(\lambda_i-\lambda_j)^2R_{ijij}\geq 0$$
		\end{lemma}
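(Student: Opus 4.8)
To establish Lemma \ref{lemma: li adapted} the plan is to convert the assertion into a pointwise algebraic inequality among the principal curvatures and then to apply Okumura's lemma. The first step is to feed the Gauss equation, which gives $R_{ijij}=c+\lambda_i\lambda_j$ for $i\neq j$, into the sum and to express it through the power sums $\sum_i\lambda_i=mf$, $\sum_i\lambda_i^2=|A|^2$ and $\sum_i\lambda_i^3=\trace A^3$. A direct expansion produces
\[
\sum_{i,j=1}^m(\lambda_i-\lambda_j)^2R_{ijij}=2mc\bigl(|A|^2-mf^2\bigr)+2mf\,\trace A^3-2|A|^4 .
\]
I would then pass to the traceless principal curvatures $\mu_i:=\lambda_i-f$, so that $\sum_i\mu_i=0$ and $t^2:=\sum_i\mu_i^2=|A|^2-mf^2\ge 0$. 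Substituting $\trace A^3=\sum_i\mu_i^3+3f|A|^2-2mf^3$ together with $|A|^2=t^2+mf^2$ and simplifying (all the $f^4$-terms cancel), the right-hand side collapses to
\[
\sum_{i,j=1}^m(\lambda_i-\lambda_j)^2R_{ijij}=2t^2\bigl(m(c+f^2)-t^2\bigr)+2mf\sum_{i=1}^m\mu_i^3 .
\]

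At umbilical points $t=0$ and the expression vanishes, so it remains to treat the case $t>0$. Here Okumura's lemma enters: since $\sum_i\mu_i=0$, it gives $\bigl|\sum_i\mu_i^3\bigr|\le\frac{m-2}{\sqrt{m(m-1)}}\,t^3$ pointwise (the statement is purely algebraic, so the fact that the principal curvatures are only continuous is irrelevant). Consequently $2mf\sum_i\mu_i^3\ge-\frac{2m(m-2)}{\sqrt{m(m-1)}}|f|\,t^3$, and factoring out $2t^2\ge 0$ it suffices to prove the quadratic inequality
\[
t^2+\frac{m(m-2)}{\sqrt{m(m-1)}}|f|\,t-m(c+f^2)\le 0 .
\]
Both hypotheses of the lemma are used at this point. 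The assumption $c+f^2\ge 0$ when $c<0$ makes the constant term $-m(c+f^2)$ nonpositive, so the left-hand side — a quadratic in $t$ with positive leading coefficient and nonpositive value at $t=0$ — is $\le 0$ exactly on an interval $[0,t_+]$; the same assumption also forces $m^2f^2+4(m-1)c\ge -c(m-2)^2\ge 0$, which is precisely what makes the radical appearing in \eqref{eq: c case} meaningful.

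Finally I would compute the nonnegative root $t_+$ and check that the condition $t^2\le t_+^2$ — equivalently $|A|^2=t^2+mf^2\le mf^2+t_+^2$ — coincides with the bound \eqref{eq: c case}. The elementary identity $(m-2)^2+4(m-1)=m^2$ makes $t_+^2$, after adding $mf^2$ and collecting $2mf^2+\frac{m(m-2)^2}{2(m-1)}f^2=\frac{m^3}{2(m-1)}f^2$, reduce exactly to the right-hand side of \eqref{eq: c case}, which closes the argument. I expect this last step — reconciling the Okumura threshold with the somewhat opaque expression in \eqref{eq: c case} while keeping careful track of the square-root term and the sign condition on $c+f^2$ — to be the main obstacle, although it is entirely computational; the conceptual input is confined to the Gauss equation and Okumura's lemma.
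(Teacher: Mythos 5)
Your proposal is correct: the expansion via the Gauss equation, the passage to the traceless quantities $\mu_i=\lambda_i-f$, the pointwise application of Okumura's lemma, and the identification of the Okumura threshold $mf^2+t_+^2$ with the right-hand side of \eqref{eq: c case} (using $(m-2)^2+4(m-1)=m^2$) all check out, and the role of the hypothesis $c+f^2\geq 0$ for $c<0$ in guaranteeing a nonnegative root and a real radical is correctly identified. This is essentially the same route the paper points to -- it does not reprove the lemma but cites \cite{andronic2025} and explicitly attributes the argument to Okumura's algebraic lemma as in Li's work.
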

	Now, we can give the proof of the second theorem of this work.
\subsection{The proof of Theorem \ref{Theorem 2}}:

	First notice that the hypothesis
	$$|A|^2\leq\frac{m^2f^2}{m-1}$$
	obviously implies that $\sum\limits_{i,j=1}^m(\lambda_i-\lambda_j)^2R_{ijij}\geq 0$  from Lemma \ref{lemma: li adapted}, since the ambient space is $\mathbb{R}^{m+1}$, i.e. $c=0$. Moreover, for $m\geq 7$, we have
	$$|A|^2\leq\frac{m^2f^2}{m-1}\leq \frac{m^2f^2}{6}.$$
		
	Thus, Theorem \ref{Theorem 1} completes the proof.\hfill\qed
	
	\textbf{Conflict of interest}
	
	The author declares that there is no conflict of interest.
	
	\textbf{Data Availability}
	
	No data was used for the research described in the article.
	\bibliographystyle{abbrv}
	\bibliography{Bibliography.bib}
\end{document}